\documentclass[10pt,electronic]{amsart}        
\usepackage[applemac]{inputenc}
\usepackage[T1]{fontenc}
\usepackage[small,it]{caption}
\usepackage[english]{babel}
\usepackage{url}
\usepackage[hmargin=3.7cm,vmargin=3.4cm]{geometry} 
\usepackage{graphicx}                         
\usepackage{amsmath}
\usepackage{amsthm}                      
\usepackage{amssymb}
\usepackage{amscd}     
\usepackage{mathrsfs}
\usepackage{stmaryrd}
\usepackage{enumerate}
\usepackage{euscript}
\renewcommand{\mathcal}{\EuScript}

\usepackage[isbn=false,doi=false,eprint=false,url=false,style=authoryear,dashed=false,backend=bibtex]{biblatex}
\makeatletter
\newrobustcmd*{\parentexttrack}[1]{%
  \begingroup
  \blx@blxinit
  \blx@setsfcodes
  \blx@bibopenparen#1\blx@bibcloseparen
  \endgroup}
\AtEveryCite{%
  }
\makeatother
\renewcommand{\cite}{\parencite}
\DeclareNameAlias{sortname}{last-first}
\renewbibmacro*{volume+number+eid}{%
  \printfield{volume}%
  \setunit*{\addnbspace}
  \printfield{number}%
  \setunit{\addcomma\space}%
  \printfield{eid}}
 \DeclareFieldFormat[article]{volume}{\textbf{#1}}
\DeclareFieldFormat[article]{number}{{\mkbibparens{#1}},}
\DeclareFieldFormat{pages}{#1 pages}
\DeclareFieldFormat
  [article,inbook,incollection,inproceedings,patent,thesis,unpublished]
  {pages}{#1}
\renewbibmacro{in:}{}
\theoremstyle{plain}                                                           
\newtheorem{thm}{Theorem}[section]

\newtheorem{prop}[thm]{Proposition}

\theoremstyle{definition}

\newtheorem{rem}[thm]{Remark}

\parindent=0pt 
\parskip=10pt  

\DeclareMathOperator{\codim}{codim}
\newcommand{\field}[1]{\ensuremath{\mathbf{#1}}}
\newcommand{\Q}{\ensuremath{\field{Q}}}        
\newcommand{\C}{\ensuremath{\mathcal{C}}}

\newcommand{\Z}{\ensuremath{\field{Z}}} 

\newcommand{\p}{\ensuremath{\field{P}}}

\newcommand{\M}{\mathcal{M}}
\newcommand{\MM}{\overline{\mathcal{M}}}

\newcommand{\Bl}{\mathrm{Bl}}
\newcommand{\st}{\,:\,}

\title{Poincar\'e duality of wonderful compactifications and tautological rings}

\author{Dan Petersen}
\thanks{The author is supported by the Danish National Research Foundation through the
Centre for Symmetry and Deformation (DNRF92).}
\email{danpete@math.ku.dk}
\address{Institut for Matematiske Fag \\
K{\o}benhavns Universitet \\
Universitetsparken 5 \\
2100 K{\o}benhavn {\O} }
\addbibresource{../database.bib}

\begin{document} 
 \maketitle   
 
 \begin{abstract}Let $g \geq 2$. Let $\M_{g,n}^{rt}$ be the moduli space of $n$-pointed genus $g$ curves with rational tails. Let $\C_g^n$ be the $n$-fold fibered power of the universal curve over $\M_g$. We prove that the tautological ring of $\M_{g,n}^{rt}$ has Poincar\'e duality if and only if the same holds for the tautological ring of $\C_g^n$. We also obtain a presentation of the tautological ring of $\M_{g,n}^{rt}$ as an algebra over the tautological ring of $\C_g^n$. This proves a conjecture of Tavakol. Our results are valid in the more general setting of wonderful compactifications. 
 \end{abstract}
 
 \section{introduction}

Let $g \geq 2$, and let $\M_g$ be the moduli space of smooth curves of genus $g$. Let $\C_g \to \M_g$ be the universal curve, and let $\C_g^n$ be its $n$-fold fibered power. We denote by $R^\bullet(\C_g^n)$ the \emph{tautological ring} of $\C_g^n$, which is a subalgebra of its rational Chow ring $A^\bullet(\C_g^n)$ generated by certain geometrically natural classes. The tautological ring was introduced by Mumford \cite{mumfordtowards} and has been intensely studied since then, in particular because of the series of conjectures known as the \emph{Faber conjectures} \cite{faberconjectures,pandharipandequestions} and because of the role it plays in Gromov--Witten theory. By \cite{looijengatautological,fabernonvanishing}, it is known that $R^{g-2+n}(\C_g^n) \cong \Q$ and $R^k(\C_g^n) = 0$ for $k > g-2+n$. One part of the {Faber conjectures} asserts that $R^\bullet(\C_g^n)$ is a Poincar\'e duality algebra with socle in degree $g-2+n$; that is, that the cup-product pairing into the top degree is perfect. 



One can also consider the space of $n$-pointed curves of genus $g$ with \emph{rational tails}, $\M_{g,n}^{rt}$. The space $\M_{g,n}^{rt}$ is defined as the preimage of $\M_g$ under the forgetful map $\MM_{g,n} \to \MM_g$ between Deligne--Mumford compactifications. It, too, has a tautological ring, and 
according to the Faber conjectures $R^\bullet(\M_{g,n}^{rt})$ is also a Poincar\'e duality algebra with socle in degree $g-2+n$. It is believed among experts that these two conjectures `should' be equivalent. For example, \cite[Appendix A]{pixtonthesis} writes: 

\begin{quote}
``Also, instead of doing computations in $\M^{rt}_{g,n}$ we work with $\C_g^n$, the $n$th power of the universal curve over $\M_g$. The tautological rings of these two spaces are very closely related, and it seems likely that the Gorenstein discrepancies are always equal in these two cases.''
\end{quote}

However, I am not aware of any precise result along these lines in the literature. The goal of this note is to prove an explicit relationship between the two tautological rings, which in particular implies that $R^\bullet(\M_{g,n}^{rt})$ will have Poincar\'e duality if and only the same is true for $R^\bullet(\C_g^n)$ (Theorem \ref{maincor1}). In fact, our results give an expression for the Gorenstein discrepancies of $R^\bullet(\M_{g,n}^{rt})$ in terms of those of $R^\bullet(\C_g^m)$  for $1 \leq m \leq n$. We also deduce a presentation of $R^\bullet(\M_{g,n}^{rt})$ as an algebra over $R^\bullet(\C_g^n)$ (Proposition \ref{fmcor}) from known results on the Chow rings of Fulton--MacPherson compactifications. This proves a  conjecture of Tavakol. 

We remark that it is likely that $R^\bullet(\M_{g,n}^{rt})$ does \emph{not} have Poincar\'e duality in general. Counterexamples to the analogous conjectures for the spaces $\MM_{g,n}$ and $\M_{g,n}^{ct}$ have been constructed in \cite{petersentommasi,m28ct}. The conjecture that Pixton's extension of the Faber--Zagier relations give rise to all relations in the tautological rings would imply that $R^\bullet(\M_{g,n}^{rt})$ fails to have Poincar\'e duality in general \cite{pixtonthesis,pandharipandepixtonzvonkine,jandachow}; as would Yin's conjecture that all relations on the symmetric power $\C_g^{[n]}$ should arise from motivic relations on the universal jacobian \cite{yin}. See also the discussion in \cite{pcmifaber}.

What we prove is a  general result about intersection rings of \emph{wonderful compactifications} \cite{wonderfulcompactification}. Let $Y$ be a smooth variety, and $G$ a collection of subvarieties of $Y$ which form a \emph{building set} (see Subsection \ref{wonderfulsection}). The wonderful compactification $Y_G$ is obtained from $Y$ by a sequence of blow-ups in smooth centers, given by intersections of the elements of $G$. Our motivation for considering these is that if we take $Y = \C_g^n$ and $G$ the collection of all diagonal loci, then $Y_G \cong \M_{g,n}^{rt}$. It is perhaps worth pointing out that in this particular case --- where $Y$ is given by an $n$-fold cartesian power, and $G$ consists of all diagonals --- the wonderful compactification reduces to the compactification introduced in \cite{fmcompactification}. 

By a result of \cite{limotives}, the Chow ring of $Y_G$ can be expressed  in a combinatorial fashion in terms of the Chow rings of $Y$ and the Chow rings of certain intersections of elements of $G$, which we call \emph{burrows}, see Theorem \ref{lithm}. An identical formula works also for the respective cohomology rings. Suppose now that $Y$ is also compact. Then the cohomology rings of $Y$ and all burrows will have Poincar\'e duality, and so will the cohomology ring of $Y_G$ (as all these spaces are smooth and compact). One might therefore guess that Poincar\'e duality of $H^\bullet(Y_G)$ can be deduced purely combinatorially from Poincar\'e duality for $Y$ and for the burrows. This turns out to be true, and quite easy to prove (independently of Li's result mentioned in the first sentence of this paragraph): the inductive structure of the wonderful compactification implies that one only needs to check that Poincar\'e duality is preserved under two `basic' operations, where we either blow up in a single smooth center or form a projective bundle. In particular, the same phenomenon --- that Poincar\'e duality for $Y$ and for all burrows implies Poincar\'e duality of $Y_G$ --- will work equally well on the level of Chow rings, and for not necessarily compact $Y$. Moreover, as will be crucial for us, the argument works identically for the tautological rings.

\section{Poincar\'e duality and wonderful compactifications}

\subsection{Wonderful compactifications}\label{wonderfulsection}
We refer to the papers \cite{wonderfulcompactification,limotives} for precise definitions of \emph{wonderful compactification}, \emph{building set} and \emph{nest}, as well as for proofs of the below assertions. Instead we state only as much as is needed for the logic of the proof.

Let $Y$ be a smooth variety. Let $G$ be a \emph{building set} in $Y$. This means that $G$ is a collection of closed subvarieties of $Y$ satisfying certain conditions regarding the combinatorics of how the varieties in $G$ may intersect each other. These conditions state in particular that any nonempty intersection of elements of $G$ is smooth. We call such intersections \emph{burrows}. This terminology does not appear in Li's work; it is motivated by the fact that a burrow is the intersection of the elements of a \emph{nest}. In particular, $Y$ itself is a burrow, corresponding to the empty intersection.

The \emph{wonderful compactification} $Y_G$ is obtained from $Y$ by an iterative procedure as follows: let $X$ be an  element of minimal dimension\footnote{As explained by Li, there are other possible orders in which one can perform the blow-ups, but this one will suffice for our purposes.} in $G$, and let $Y^{(1)}= \Bl_X Y$. There is an induced building set $G^{(1)}$ in $ Y^{(1)}$ which consists of the strict transforms of all elements of $G$. 
We define (by induction) $Y_G =  Y^{(1)}_{G^{(1)}}$. The reason this makes sense as an inductive definition is that eventually we obtain a variety $Y^{(n)}$ with a building set $G^{(n)}$ all of whose elements are Cartier divisors, after which all further blow-ups are canonically isomorphisms and $Y_G = Y^{(n)}$. Every burrow for the building set $G^{(1)}$ is obtained from a burrow for $G$ either as a blow-up in a smaller burrow, or as a projective bundle of some rank. More precisely, if $\{Z_\alpha\}_{\alpha \in A}$ is the set of burrows for $G$, then the set of burrows for $G^{(1)}$ is given by $$\{\widetilde Z_\alpha\}_{\alpha \in A} \cup \{E_X \cap \widetilde Z_\alpha\}_{\substack{\alpha \in A \\ \emptyset \neq Z_\alpha \cap X \neq X }},$$ where $E_X$ denotes the exceptional divisor and $\sim$ denotes the dominant transform. Recall that the dominant transform coincides with the strict transform except for varieties contained in the center of the blow-up, in which case the strict transform is empty and the dominant transform is the preimage of the subvariety. 

The wonderful compactification $Y_G$ is again smooth, containing $Y^\circ = Y \setminus \bigcup_{X \in G} X$ as the complement of a strict normal crossing divisor. If $Y$ is compact, then $Y_G$ is a compactification of $Y^\circ$, which explains the awkward terminology (specifically, that a `wonderful compactification' is not necessarily compact). In practice one often starts with the space $Y^\circ$ and one wishes to compactify it so that its complement is a normal crossing divisor, which is useful e.g.\ in mixed Hodge theory \cite{hodge2}. By Hironaka's theorem such a compactification always exists, but the advantage of $Y_G$ is its explicit description and combinatorial structure. The irreducible components of the normal crossing divisor correspond bijectively to the elements of $G$. 


\begin{rem} Here are some examples of wonderful compactifications. 
\begin{enumerate}
\item Let $X$ be a smooth variety, and $Y=X^n$ its $n$-fold cartesian power. For every subset $I \subseteq [n]$ we have a diagonal $$D_I = \{(x_1,\ldots,x_n) \in Y \st x_i = x_j \text{ for } i, j \in I\}.$$ The collection of all diagonals $D_I$ with $\vert I \vert \geq 2$ form a building set, and the corresponding wonderful compactification is the Fulton--MacPherson compactification \cite{fmcompactification} of $Y^\circ = F(X,n)$, the configuration space of $n$ distinct ordered points on $X$.
\item As remarked in Fulton and MacPherson's original paper, their construction makes sense just as well for a smooth family of algebraic varieties $X \to S$ over a smooth base, and the collection of diagonals in the $n$-fold fibered power of $X$ over $S$. In particular, we can consider the universal family $\C_g \to \M_g$ over the moduli space of curves of genus $g$ and its $n$-fold fibered power $\C_g^n$. For $I \subseteq [n]$ we have the diagonals $D_I = \{(C;x_1,\ldots,x_n) \in \C_g^n \st x_i = x_j \text{ for } i, j \in I\}$ which form a building set (this time we can even take $\vert I \vert \geq 3$ if we wish, since the diagonals with $\vert I \vert = 2$ are already Cartier divisors). The resulting wonderful compactification is exactly $\M_{g,n}^{rt}$.
\item In the previous examples, we can instead consider \emph{polydiagonals}, i.e.\ arbitrary intersections of diagonals. These also form a building set, and the corresponding wonderful compactification was first introduced by \cite{ulyanov}.
\item Let $Y$ be a complex vector space, and suppose that the building set $G$ is a collection of subspaces. In this case, $Y_G$ is the \emph{wonderful model} of the subspace arrangement, introduced by \cite{deconciniprocesiwonderful}. 
\item Let $Y = \p^n$, and choose $n+2$ points in general position on $Y$. Let $G$ be the collection of all projective subspaces spanned by these points. This is a building set, and $Y_G \cong \MM_{0,n+3}$, by a construction of \cite{kapranovchow}.
\item Let $Y = (\p^1)^n$, and take $G$ to be the set of all diagonals $D_I$, as well as all subsets of the form
$$ D_{I,p} = \{(x_1,\ldots,x_n) \in Y \st x_i = p \text{ for } i \in I\}$$
for $p \in \{0,1,\infty\}$. In this example, too, $Y_G \cong \MM_{0,n+3}$ \cite{keel}. 
\end{enumerate}
\end{rem}
 
\subsection{Preservation of Poincar\'e duality} 
 
In this section we shall consider operations on algebraic varieties which preserve the property of having a Chow ring with Poincar\'e duality. This property may seem a bit unnatural, except in the very special case of a smooth compact variety whose Chow ring maps isomorphically onto the cohomology ring (e.g.\ one with an algebraic cell decomposition).  Nevertheless we can of course study it. Later we will observe that all propositions below remain valid if the Chow rings are replaced with tautological rings, in the cases we are interested in. 
 

 \begin{prop} \label{prop1} Let $Y$ be a smooth variety and $i \colon Z \hookrightarrow Y$ a smooth closed subvariety of codimension $c$. Suppose that $A^d(Y) \cong A^{d-c}(Z) \cong \Q$ for some integer $d$, and that both intersection rings vanish above these degrees, and that $0 \neq [Z] \in A^\bullet(Y)$. The following are equivalent:
 \begin{enumerate}[\qquad\scshape(1)]
 \item $A^\bullet(\Bl_Z Y)$ has Poincar\'e duality.
 \item $A^\bullet(Y)$ and $A^\bullet(Z)$ both have Poincar\'e duality. 
\end{enumerate}  \end{prop}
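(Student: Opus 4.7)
The plan is to combine the standard additive blow-up formula for Chow rings with a direct block-by-block analysis of the Poincar\'e pairing. Let $\pi\colon \widetilde Y = \Bl_Z Y \to Y$ denote the blow-up, $j\colon E \hookrightarrow \widetilde Y$ the exceptional divisor (the projective bundle $\p(N_{Z/Y})$ of relative dimension $c-1$), $q\colon E\to Z$ its structure map, and $h = c_1(\mathcal{O}_E(1))$. The standard blow-up formula gives an additive decomposition
\[A^\bullet(\widetilde Y) \;\cong\; \pi^* A^\bullet(Y) \;\oplus\; \bigoplus_{k=0}^{c-2} j_*\bigl(q^* A^{\bullet-1-k}(Z)\cdot h^k\bigr).\]
Since $m \leq d-c$ and $0 \leq k \leq c-2$ force each summand $j_*(q^* A^m(Z)\cdot h^k)$ into Chow degree at most $d-1$, one reads off immediately that $A^d(\widetilde Y) = \pi^* A^d(Y) \cong \Q$.

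I would next compute the intersection pairing on $A^\bullet(\widetilde Y)$ in this decomposition using the projection formula and the standard identities $\pi^* \alpha \cdot j_* \eta = j_*(q^* i^* \alpha \cdot \eta)$, $j^* j_* \eta = -h \cdot \eta$, $\pi_* j_* = i_* q_*$, together with the projective-bundle push-forward $q_*(h^k) = s_{k-c+1}(N_{Z/Y})$ (vanishing for $k<c-1$, equal to $1$ for $k=c-1$). Three observations drop out. First, $\pi^* A^\bullet(Y)$ is orthogonal to each $j_*(q^* A^\bullet(Z) \cdot h^k)$ with $0 \leq k \leq c-2$, since the product pushes down to $i_*(\,\cdots \cdot q_*(h^k)) = 0$. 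Second, the pairing restricted to $\pi^* A^\bullet(Y)$ reproduces the $A^\bullet(Y)$-pairing via $\pi_* \pi^* = \mathrm{id}$. Third, the product $j_*(q^* \gamma_1\cdot h^{k_1}) \cdot j_*(q^* \gamma_2 \cdot h^{k_2})$ pushes down to $-i_*\bigl(\gamma_1 \gamma_2 \cdot s_{k_1+k_2-c+2}(N_{Z/Y})\bigr)$, which vanishes for $k_1 + k_2 < c-2$ and, when $k_1 + k_2 = c-2$, equals (up to sign) $i_*(\gamma_1\gamma_2) \in A^d(Y)$.

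The equivalence then follows formally. The pairing on $A^\bullet(\widetilde Y)$ is an orthogonal direct sum of the $Y$-block and the $j_*$-block. Ordering the latter by the index $k$, it is block anti-triangular with anti-diagonal blocks equal (up to sign) to the $A^\bullet(Z)$-pairing, and such a matrix is invertible iff its anti-diagonal blocks are. Hence Poincar\'e duality for $A^\bullet(\widetilde Y)$ is equivalent to Poincar\'e duality for both $A^\bullet(Y)$ and $A^\bullet(Z)$. The hypothesis $[Z] \neq 0$ enters precisely to guarantee that $i_* \colon A^{d-c}(Z) \to A^d(Y)$ is an isomorphism, so that the push-down identification used in the third observation really does recover the intrinsic $A^\bullet(Z)$-pairing.

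The main technical obstacle is checking that the Segre-class corrections appearing in the third observation lie strictly above the anti-diagonal of the $j_*$-block, so that the anti-triangular structure really is as claimed; once this is in place the rest of the argument is routine projection-formula bookkeeping.
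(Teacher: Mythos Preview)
Your argument is correct and follows essentially the same route as the paper's proof: both use the additive blow-up decomposition, observe that the $Y$-summand is orthogonal to the rest, and then reduce to a block-(anti)triangular analysis of the remaining summands whose diagonal blocks are copies of the $Z$-pairing. The only cosmetic difference is that the paper works with powers of the exceptional divisor class $E$ and explicit multiplication rules, whereas you phrase the same computation via $h = c_1(\mathcal O_E(1))$, the projection formula, and Segre classes; after the reindexing $E^k \leftrightarrow j_*(\,\cdot\, h^{k-1})$ the two arguments are identical. One small point worth making explicit (the paper does): the implication ``$[Z]\neq 0 \Rightarrow i_*\colon A^{d-c}(Z)\to A^d(Y)$ is an isomorphism'' is not automatic---you use Poincar\'e duality for $Y$ to find $\alpha\in A^{d-c}(Y)$ with $\alpha\cdot[Z]\neq 0$, and then $i_* i^*\alpha = \alpha\cdot[Z]\neq 0$ shows $i_*$ is nonzero in top degree.
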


\begin{proof}Under the hypotheses, we have
\begin{equation} \tag{*}
A^i(\Bl_Z Y) \cong A^i(Y) \oplus \bigoplus_{k=1}^{c-1}  A^{i-k}(Z) \cdot E^k, \label{chowringdecomposition}
\end{equation}
where $E$ stands for the class of the exceptional divisor. The ring structure is given by the rules
$$ \alpha \cdot E = i^\ast(\alpha) \cdot E$$
for $\alpha \in A^\bullet(Y)$, and 
$$ \alpha \cdot E^c = (-1)^ci_\ast(\alpha) + \sum_{i=1}^{c-1} (-1)^i\alpha c_i \cdot E^{c-i}$$
for $\alpha \in A^\bullet(Z)$, where $c_i \in A^i(Z)$ is the $i$th Chern class of the normal bundle $N_{Z\subset Y}$. It follows easily that the intersection matrix describing the pairing $$A^i(\Bl_Z Y) \otimes A^{d-i}(\Bl_Z Y) \to A^d(\Bl_Z Y) \cong \Q$$
becomes block upper triangular when the summands in \eqref{chowringdecomposition} are ordered as $$1, E, E^2, \ldots, E^{c-1}$$ in degree $i$, and ordered as $$1, E^{c-1}, E^{c-2}, \ldots, E $$
in degree $d-i$. Explicitly, the matrix will take the form
$$ \begin{bmatrix}
\ast & 0 & 0 & 0 & \cdots & 0 \\
0 & \ast & \ast & \ast & \cdots & \ast \\
0 & 0 & \ast & \ast & \cdots & \ast \\
0 & 0 & 0 & \ast & \cdots & \ast \\
\vdots & \vdots & \vdots & \vdots & \ddots & \vdots \\
0 & 0 & 0 & 0 & \cdots & \ast   
\end{bmatrix},$$
where each $\ast$ denotes a block of the matrix. Now the first diagonal block is given by the intersection pairing $A^i(Y) \otimes A^{d-i}(Y) \to A^d(Y) \cong \Q$, and the remaining are given (up to a scalar) by the pairings $A^{i-k}(Z) \otimes A^{d-i-c+k}(Z) \to A^{d-c}(Z) \cong \Q$ for $k=1,\ldots,c-1$. 
Thus if $A^\bullet(\Bl_Z Y)$ has Poincar\'e duality, i.e.\ if this matrix is invertible for all $i$, then $A^\bullet(Y)$ and $A^\bullet(Z)$ must also have Poincar\'e duality. 

 Assume conversely that $A^\bullet(Y)$ and $A^\bullet(Z)$ have Poincar\'e duality. This is nearly enough to conclude that the matrix above is invertible, except if $i_\ast \colon A^{d-c}(Z) \to A^d(Y)$ vanishes, in which case all but the first diagonal block will be zero. But since we supposed in addition that $[Z] \neq 0$, then since $A^\bullet(Y)$ has Poincar\'e duality, there is a class $\alpha \in A^{d-c}(Y)$ with $\alpha [Z] \neq 0$. Then $i_\ast i^\ast \alpha \neq 0$, so we are done. 
\end{proof}

 \begin{prop} \label{prop2} Let $Y$ be a smooth variety and let $E \to Y$ be a rank $r$ vector bundle. Then $A^\bullet(Y)$ has Poincar\'e duality (with socle in degree $d$) if and only if $A^\bullet(\mathbf P(E))$ has Poincar\'e duality (with socle in degree $d+r-1$).
 \end{prop}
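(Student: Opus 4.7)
The plan is to mimic the strategy of Proposition \ref{prop1}: use the projective bundle formula to write $A^\bullet(\mathbf P(E))$ as a free module over $A^\bullet(Y)$, then show that in a suitable basis the intersection pairing becomes block triangular with diagonal blocks given by copies of the intersection pairing on $Y$.

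First, I would invoke the projective bundle formula. Setting $\xi = c_1(\mathcal{O}_{\mathbf P(E)}(1))$ and $\pi \colon \mathbf P(E) \to Y$ the structure map, there is an isomorphism of graded $A^\bullet(Y)$-modules
\[
A^i(\mathbf P(E)) \;\cong\; \bigoplus_{k=0}^{r-1} A^{i-k}(Y)\cdot \xi^k.
\]
In particular the top nonvanishing degree is $d+r-1$ and $A^{d+r-1}(\mathbf P(E)) \cong A^d(Y)\cdot\xi^{r-1} \cong \Q$, with the isomorphism realized by $\pi_\ast$. The ring structure is controlled by the relation $\xi^r = -\sum_{j=1}^r c_j(E)\xi^{r-j}$, but what I actually need is just the standard Segre computation $\pi_\ast(\xi^{r-1+j}) = s_j(E)$ for $j\geq 0$, together with $\pi_\ast(\xi^k) = 0$ for $k < r-1$.

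Next I would analyze the pairing $A^i(\mathbf P(E)) \otimes A^{d+r-1-i}(\mathbf P(E)) \to A^{d+r-1}(\mathbf P(E)) \cong \Q$. Order the first factor by the basis $\alpha\cdot\xi^k$ for $k = 0,1,\ldots,r-1$, and the second factor by $\beta\cdot\xi^{r-1-m}$ for $m = 0,1,\ldots,r-1$. The pairing of $\alpha\xi^k$ with $\beta\xi^{r-1-m}$ is
\[
\pi_\ast\bigl(\alpha\beta\,\xi^{r-1+k-m}\bigr) =
\begin{cases} 0 & \text{if } k < m, \\ \alpha\beta & \text{if } k = m, \\ \alpha\beta\cdot s_{k-m}(E) & \text{if } k > m,\end{cases}
\]
all computed in $A^d(Y) \cong \Q$. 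Hence the intersection matrix is block lower triangular, and its $k$th diagonal block is precisely the pairing $A^{i-k}(Y)\otimes A^{d-i+k}(Y) \to A^d(Y)$ for $k = 0,1,\ldots,r-1$.

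Finally, this block triangular form immediately gives the equivalence: the intersection matrix on $\mathbf P(E)$ is invertible for all $i$ if and only if every pairing $A^j(Y) \otimes A^{d-j}(Y) \to A^d(Y)$ is perfect, that is, if and only if $A^\bullet(Y)$ has Poincar\'e duality with socle in degree $d$. I do not expect any real obstacle here; the one thing to be slightly careful about is the sign/normalization convention for the Segre classes, which only affects the subdiagonal blocks and so is irrelevant for invertibility.
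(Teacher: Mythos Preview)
Your proposal is correct and is exactly the argument the paper has in mind: the paper's own proof simply reads ``The proof is analogous to the previous one, but simpler, using the projective bundle formula,'' and your writeup supplies precisely those details---the projective bundle decomposition, the block-triangular form of the pairing via $\pi_\ast(\xi^{r-1+j})=s_j(E)$, and the identification of the diagonal blocks with the pairings on $Y$. Nothing further is needed.
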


\begin{proof}The proof is analogous to the previous one, but simpler, using the projective bundle formula. 
\end{proof}

An analogous statement can be proven for a {wonderful compactification} by iterating the previous two propositions. 

\begin{prop} \label{poincaredualitywonderful}Let $Y$ be a smooth variety. Let $G$ be a building set in $Y$. Suppose that there exists an integer $d$ such that for every  burrow $Z \subseteq Y$ (including $Z=Y$), $A^{d -\codim Z}(Z) \cong \Q$ and $A^\bullet(Z)$ vanishes above this degree, and $[Z] \neq 0$ in $A^\bullet(Y)$. Assume also that all restriction maps $A^\bullet(Y) \to A^\bullet(Z)$ are surjective (equivalently, the restriction map for any pair of burrows is surjective). Then the following are equivalent: 
\begin{enumerate}[\qquad\scshape(1)]
\item $A^\bullet(Y_G)$ has Poincar\'e duality.
\item For every burrow $Z$, $A^\bullet(Z)$ has Poincar\'e duality.
\end{enumerate}
\end{prop}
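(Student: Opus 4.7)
I would prove this by induction on the number of blow-ups needed to construct $Y_G$ from $Y$. The base case is the empty building set, where $Y_G = Y$ and the only burrow is $Y$ itself, and the equivalence is vacuous. For the inductive step, pick an element $X \in G$ of minimal dimension and consider the first blow-up $\pi\colon Y^{(1)} = \Bl_X Y \to Y$ described in Subsection \ref{wonderfulsection}. Since $Y_G = (Y^{(1)})_{G^{(1)}}$, the plan splits into two tasks: (a) check that the hypotheses of the proposition continue to hold for the pair $(Y^{(1)}, G^{(1)})$ with the same integer $d$, so the inductive hypothesis applies; and (b) show that Poincar\'e duality of all burrows of $Y^{(1)}$ is equivalent to Poincar\'e duality of all burrows of $Y$.

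Step (b) is the structural core. The explicit list of new burrows recalled in Subsection \ref{wonderfulsection} consists of dominant transforms $\widetilde{Z_\alpha}$ and intersections $E_X \cap \widetilde{Z_\alpha}$. Each dominant transform is either $Z_\alpha$ itself (if $Z_\alpha \cap X = \emptyset$), a blow-up $\Bl_{X \cap Z_\alpha} Z_\alpha$ (when the intersection is proper in both), or the exceptional divisor $E_X$ viewed as the projectivization of the normal bundle of $X$ (when $Z_\alpha = X$). Each $E_X \cap \widetilde{Z_\alpha}$ is a projective bundle over $Z_\alpha \cap X$. Proposition \ref{prop1} thus equates Poincar\'e duality of each blow-up new burrow with the simultaneous Poincar\'e duality of $Z_\alpha$ and $Z_\alpha \cap X$, while Proposition \ref{prop2} equates Poincar\'e duality of each projective bundle new burrow with that of its base $Z_\alpha \cap X$. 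Since the building set axioms guarantee that every nonempty intersection $Z_\alpha \cap X$ is itself a burrow for $G$, the condition that all new burrows have Poincar\'e duality is exactly the condition that all old burrows do.

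For step (a), the socle-degree identifications $A^{d - \codim Z'}(Z') \cong \Q$ and the vanishing above this degree transfer to each new burrow via the decomposition formula \eqref{chowringdecomposition} together with the projective bundle formula, and surjectivity of the restriction maps among new burrows reduces to the assumed surjectivity for the old ones by the same formulas. The main obstacle I anticipate is the remaining part of (a), namely the verification that $[Z'] \neq 0$ in $A^\bullet(Y^{(1)})$ for each new burrow $Z'$; this has to be extracted from the decomposition formula together with the hypothesized nonvanishing of $[Z_\alpha]$ and $[Z_\alpha \cap X]$ in the Chow ring of $Y$, and it is the one place where the assumption that fundamental classes of all burrows are nonzero is genuinely used inductively. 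Once these bookkeeping items are in place, Propositions \ref{prop1} and \ref{prop2} feed directly into the induction, and matching up the old and new lists of burrows closes the argument.
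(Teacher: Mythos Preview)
Your proposal is correct and follows essentially the same inductive strategy as the paper's proof: verify that the hypotheses persist after the first blow-up, then invoke Propositions \ref{prop1} and \ref{prop2} to match Poincar\'e duality of the new burrows with that of the old ones. The one place where the paper is more explicit than your sketch is the nonvanishing of $[Z^{(1)}]$ for the projective-bundle-type burrows: there the paper uses Fulton's key formula $\pi^\ast[Z] = j_\ast(c_{\mathrm{top}}(E))$ for the excess bundle, together with the surjectivity hypothesis, to write $\pi^\ast[Z] = [Z^{(1)}]\cdot c$ and conclude from injectivity of $\pi^\ast$; you should fill this in rather than appealing only to the decomposition formula \eqref{chowringdecomposition}.
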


\begin{proof}	Let $Y^{(1)}$ be the variety obtained by blowing up an element  $X \in G$ of minimal dimension, as described in Section \ref{wonderfulsection}. 
	
	We first argue that if $Z^{(1)}$ is a burrow in $Y^{(1)}$, then $A^\bullet(Y^{(1)}) \to A^\bullet(Z^{(1)})$ is surjective. Note that $A^\bullet(Y^{(1)})$ is generated as an algebra over $A^\bullet(Y)$ by the class of the exceptional divisor, and that $A^\bullet(Z^{(1)})$ is generated over $A^\bullet(Z)$ by the class of the exceptional divisor (if $Z^{(1)}$ is a blow-up of a burrow $Z$) or the hyperplane class (if $Z^{(1)}$ is a projective bundle over a burrow $Z$). In either case, surjectivity then follows from the fact that $A^\bullet(Y) \to A^\bullet(Z)$ is surjective, since the class of the exceptional divisor in $Y^{(1)}$ maps to the class of the exceptional divisor or hyperplane class, respectively. 
	
	We now argue that $[Z^{(1)}] \neq 0$ in $A^\bullet(Y^{(1)})$. If $Z^{(1)}$ is a blow-up of a burrow $Z$, then $[Z^{(1)}]$ maps to $[Z]$ under proper pushforward, so $[Z^{(1)}] \neq 0$. If $Z^{(1)}$ is the inverse image of a burrow $Z$ contained in $X$, then we may argue as follows. If $\pi \colon Y^{(1)} \to Y$ is the blow-up, then by the `key formula' \cite[Proposition 6.7(a)]{fultonintersectiontheory}, $\pi^\ast [Z] = j_\ast(c_{\mathrm{top}}(E))$, where $j \colon Z^{(1)} \to Y^{(1)}$ is the inclusion and $c_{\mathrm{top}}(E)$ is the top Chern class of the excess bundle. If $c$ denotes a class in $A^\bullet(Y^{(1)})$ whose restriction to $A^\bullet(Z^{(1)})$ is  $c_{\mathrm{top}}(E)$, then by the projection formula we have $\pi^\ast [Z] = j_\ast(1) \cdot c = [Z^{(1)}] \cdot c$. Since $\pi^\ast$ is injective we must have $[Z^{(1)}] \neq 0$. 
	The same argument works also if $Z^{(1)} = E_X \cap \widetilde Z$ for $Z$ not contained in $X$, using instead that $\pi^\ast [X \cap Z] = j_\ast (c_\mathrm{top}(Q))$ where $Q$ is the excess normal bundle of $\widetilde Z \to Z$. 

	So let us assume that $A^\bullet(Z)$ has Poincar\'e duality for all burrows $Z$. As every burrow in $Y^{(1)}$ is either a projective bundle over a burrow in $Y$ or a blow-up of one in a smaller burrow, and all classes $[Z]$ are nonzero, all burrows in $Y^{(1)}$ will have Poincar\'e duality by Propositions \ref{prop1} and \ref{prop2}. Since we have just verified that the hypothesis of the theorem are verified at each step of the construction, we are done by induction. \end{proof}

Propositions \ref{prop1}, \ref{prop2} and \ref{poincaredualitywonderful} remain valid with identical proof also for cohomology rings. We could also consider certain subalgebras of the Chow or cohomology rings, which is necessary for the applications to tautological classes that we will consider. Let us state this as a separate proposition:

\begin{prop}\label{poincaredualitywonderful2}Let $Y$ be a smooth variety. Let $G$ be a building set in $Y$. Suppose that for every burrow $Z \subseteq Y$  we have a subalgebra $R^\bullet(Z) \subseteq A^\bullet(Z)$ containing the Chern classes of all normal bundles, such that the collection $\{R^\bullet(Z)\}$ is closed under pullback and pushforwards. Define $R^\bullet(Y_G)$ as the algebra over $R^\bullet(Y)$ generated by all divisor classes $E_X$. Suppose that there exists an integer $d$ such that for every  burrow $Z$, $R^{d -\codim Z}(Z) \cong \Q$, and that $R^\bullet(Z)$ vanishes above this degree. Assume also that $[Z] \neq 0$ in $R^\bullet(Y)$ and that all restriction maps $R^\bullet(Y) \to R^\bullet(Z)$ are surjective. The following are equivalent: 
\begin{enumerate}[\qquad\scshape(1)]
\item $R^\bullet(Y_G)$ has Poincar\'e duality.
\item For every burrow $Z$, $R^\bullet(Z)$ has Poincar\'e duality.
\end{enumerate}
\end{prop}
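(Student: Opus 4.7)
The strategy is to parallel the proof of Proposition \ref{poincaredualitywonderful} while tracking everything inside the subalgebras $R^\bullet$. The first step is to establish subalgebra analogues of Propositions \ref{prop1} and \ref{prop2}. Their proofs are purely formal and rely only on the additive decomposition \eqref{chowringdecomposition} (or its projective bundle analogue), the multiplication rules $\alpha \cdot E = i^\ast(\alpha)\cdot E$ and the formula for $\alpha \cdot E^c$ in terms of $i_\ast$ and Chern classes of the normal bundle, and the block upper-triangular structure of the intersection matrix. Since by hypothesis $R^\bullet$ is closed under pullbacks and pushforwards and contains all Chern classes of normal bundles, the decomposition restricts to $R^\bullet$, the multiplication rules carry over verbatim, and the matrix argument applies identically.

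To run the induction, I produce a compatible family of subalgebras on the burrows of $Y^{(1)} = \Bl_X Y$: for each new burrow $Z^{(1)}$, which is either a dominant transform $\widetilde Z$ of a burrow $Z$ of $Y$ or a component $E_X \cap \widetilde Z$, define $R^\bullet(Z^{(1)})$ as the algebra generated over the pullback of the appropriate $R^\bullet(Z)$ (respectively $R^\bullet(Z\cap X)$) by the restriction of the exceptional divisor class or, in the projective bundle case, by its tautological hyperplane class. The standard exact sequences relating the normal bundle of $Z^{(1)}$ to the normal bundle of $Z$ (twisted by line bundles associated with $E_X$) show that the new subalgebras again contain all relevant Chern classes, and the blow-up and projective bundle formulas show that closure under pullback and pushforward is preserved. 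Surjectivity of restrictions among new burrows is inherited from the original system by the same reasoning as in the proof of Proposition \ref{poincaredualitywonderful}.

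The nonvanishing $[Z^{(1)}] \neq 0$ in $R^\bullet(Y^{(1)})$ is established by the same key-formula argument used in Proposition \ref{poincaredualitywonderful}: either $\pi_\ast[Z^{(1)}] = [Z] \neq 0$, or one writes $\pi^\ast[Z] = [Z^{(1)}] \cdot c$ for a suitable lift $c$ of the top Chern class of an excess normal bundle and invokes injectivity of $\pi^\ast \colon R^\bullet(Y) \to R^\bullet(Y^{(1)})$. The latter follows from $\pi_\ast \pi^\ast = \mathrm{id}$ combined with closure of $R^\bullet$ under $\pi_\ast$. A final observation is that iterating this construction yields on $Y_G$ precisely the subalgebra described in the statement --- the one generated over $R^\bullet(Y)$ by all divisor classes $E_X$ --- because at each step we adjoin exactly the class of the newest exceptional divisor, and the proper transforms of previously adjoined divisors are already present.

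The principal obstacle is not conceptual but technical: it is the bookkeeping needed to verify that the inductively defined $R^\bullet(Z^{(1)})$ satisfies all the hypotheses required to restart the induction --- particularly closure under pushforward for the many morphisms among burrows in $Y^{(1)}$, and the identification of Chern classes of normal bundles in $Y^{(1)}$ as polynomials in the generators of $R^\bullet$. Once this is in place, the induction unwinds exactly as in Proposition \ref{poincaredualitywonderful} and the equivalence of the two conditions drops out of the subalgebra analogues of Propositions \ref{prop1} and \ref{prop2}.
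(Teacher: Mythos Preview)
Your proposal is correct and follows exactly the approach the paper takes: the paper's entire proof is the single sentence ``The proof is identical to that of Proposition \ref{poincaredualitywonderful},'' and your outline is precisely a careful unpacking of what that sentence entails. The bookkeeping you flag as the main technical hurdle is real but routine, and the paper is content to leave it implicit.
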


\begin{proof}
The proof is identical to that of Proposition \ref{poincaredualitywonderful}. 
\end{proof}

\begin{rem} The algebra $R^\bullet(Y_G)$ could also have been described as the span of all elements of the form
	$$ \alpha \cdot E_{X_1} \cdots E_{X_k}$$
	with $\alpha \in R^\bullet(X_1 \cap \ldots \cap X_k)$. (It is not hard to see that this is well defined and that $R^\bullet(Y_G)$ is an algebra.) The equivalence of the two definitions is valid under the assumption that all restriction maps $R^\bullet(Y) \to R^\bullet(Z)$ are surjective. \end{rem}

\begin{thm} \label{maincor1} Fix $g \geq 2$ and $n \geq 1$. The following are equivalent:
\begin{enumerate}[\qquad\scshape(1)]
\item The tautological ring $R^\bullet(\M_{g,n}^{rt})$ has Poincar\'e duality.
\item The tautological rings $R^\bullet(\C_g^m)$ have Poincar\'e duality for all $m=1,\ldots, n$. 
\item The tautological ring $R^\bullet(\C_g^n)$ has Poincar\'e duality. 
\end{enumerate}
\end{thm}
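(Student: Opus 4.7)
The plan is to combine Proposition \ref{poincaredualitywonderful2} with a short forgetful--map argument. The starting point is the observation (Example 2.3(ii)) that $\M_{g,n}^{rt}$ is the wonderful compactification $Y_G$ with $Y = \C_g^n$ and $G$ the building set of diagonals $D_I$. A burrow for this building set is determined by a partition of (a subset of) $[n]$, and a burrow with $m$ equivalence classes is canonically isomorphic to $\C_g^m$, of codimension $n-m$ in $Y$. Setting $d = g-2+n$, the identity $d - \codim Z = g-2+m$ matches the top-degree isomorphism $R^{g-2+m}(\C_g^m) \cong \Q$ and the vanishing above this degree, by the results of Looijenga and Faber. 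The remaining hypotheses of Proposition \ref{poincaredualitywonderful2} (closure of the tautological rings under pullback and pushforward, containment of Chern classes of the diagonal normal bundles, nonvanishing of diagonal classes in $R^\bullet(\C_g^n)$, and surjectivity of restriction along diagonal inclusions) are either standard or easy to verify. Applying Proposition \ref{poincaredualitywonderful2} then yields (1)$\Leftrightarrow$(2).

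The implication (2)$\Rightarrow$(3) is immediate by taking $m = n$. For (3)$\Rightarrow$(2), I would argue by descending induction on $m$, reducing to the claim that Poincar\'e duality of $R^\bullet(\C_g^n)$ implies the same for $R^\bullet(\C_g^{n-1})$. Let $\pi\colon \C_g^n \to \C_g^{n-1}$ be the forgetful map, and let $s\colon \C_g^{n-1}\to \C_g^n$ be the section repeating the first point. Then $s^\ast \pi^\ast = \mathrm{id}$, so $\pi^\ast$ is injective on tautological rings; moreover the projection formula together with $\pi_\ast[s(\C_g^{n-1})] = 1$ implies that $\pi_\ast \colon R^{g-2+n}(\C_g^n) \to R^{g-2+(n-1)}(\C_g^{n-1})$ is an isomorphism of one-dimensional $\Q$-vector spaces.

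Now given $0 \neq \alpha \in R^i(\C_g^{n-1})$, the class $\pi^\ast \alpha$ is nonzero in $R^i(\C_g^n)$, so by hypothesis there exists $\beta \in R^{g-2+n-i}(\C_g^n)$ with $\pi^\ast \alpha \cdot \beta \neq 0$ in top degree. The projection formula then yields $\pi_\ast(\pi^\ast \alpha \cdot \beta) = \alpha \cdot \pi_\ast(\beta) \neq 0$ in the top degree of $R^\bullet(\C_g^{n-1})$, exhibiting $\pi_\ast(\beta) \in R^{g-2+(n-1)-i}(\C_g^{n-1})$ as a dual class to $\alpha$. Iterating produces Poincar\'e duality of $R^\bullet(\C_g^m)$ for every $m \leq n$.

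The main obstacle I expect is verifying the hypotheses of Proposition \ref{poincaredualitywonderful2} in the first step --- in particular, the surjectivity of restriction along diagonal inclusions and the stability of the tautological subalgebra under the relevant pushforwards. These are structural features of the tautological ring, and although essentially formal once the definition is set up correctly, they require care to state and check uniformly across all the burrows; the forgetful-map argument of Step 2 is by comparison entirely routine.
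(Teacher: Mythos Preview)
Your proposal is correct and follows essentially the same route as the paper: for (1)$\Leftrightarrow$(2) you invoke Proposition~\ref{poincaredualitywonderful2} with $Y=\C_g^n$ and the diagonal building set, checking the hypotheses exactly as the paper does (including surjectivity of restriction via the forgetful left inverse), and for (3)$\Rightarrow$(2) you use injectivity of $\pi^\ast$ together with the projection formula, which is the paper's argument phrased directly rather than by contraposition. The only cosmetic difference is that you descend one step at a time while the paper goes from $\C_g^m$ to $\C_g^n$ in one jump.
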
 

\begin{proof}Let us first dicuss the equivalence of (1) and (2).  Apply Proposition \ref{poincaredualitywonderful2} with $Y = \C_g^n$ and $G$ the set of diagonals. Then $Y_G \cong \M_{g,n}^{rt}$. Each burrow $Z$ is an intersection of diagonals, so it is isomorphic to some $\C_g^m$ and we can let $R^\bullet(Z)$ be its usual tautological ring. Then $R^\bullet(Y_G)$ (defined as in Proposition \ref{poincaredualitywonderful2}) coincides with the usual tautological ring $R^\bullet(\M_{g,n}^{rt})$. By \cite{looijengatautological,fabernonvanishing}, $R^{g-2+m}(\C_g^m) \cong \Q$, and the tautological ring vanishes above this degree. For each burrow $Z$, the restriction of the class $[Z]$ to any fiber of the map $\C_g^n \to \M_g$ is nonzero. The inclusion $\C_g^m \cong Z \hookrightarrow \C_g^n$ has a left inverse given by forgetting $(n-m)$ of the marked points, which implies that the restriction maps are all surjective. Thus the previous proposition applies, and we conclude that (1) and (2) are equivalent. 

We need to argue that (3) implies (2). So assume that $R^\bullet(\C_g^m)$ fails to have Poincar\'e duality for some $1 \leq m < n$. Then there exists $0 \neq \alpha \in R^\bullet(\C_g^m)$ which pairs to zero with everything in complementary degree. Let $\pi \colon \C_g^n \to \C_g^m$ be the forgetful map. By the projection formula, $\pi^\ast(\alpha)$ pairs to zero with everything in complementary degree, and the map $\pi^\ast$ is injective on Chow groups. Thus $R^\bullet(\C_g^n)$ fails to have Poincar\'e duality, too. \end{proof}

\begin{rem} \label{tavakolremark}Proposition \ref{poincaredualitywonderful2} generalizes and simplifies several results of Tavakol \cite[Section 5]{tavakol1}, \cite[Section 4]{tavakol2}, \cite[Section 8]{tavakolhyperelliptic}, \cite{tavakolconjecturalconnection} and \cite{tavakol0}. Roughly, Tavakol has in all cases proved statements of the following form: for certain algebraic curves $X$, the intersection matrix describing the pairing into the top degree in a tautological ring of a Fulton--MacPherson compactification $X[n]$ (or a similar space) is block triangular, with diagonal blocks expressed in terms of pairings in the tautological ring of $X^m$, $m\leq n$. Tavakol's proofs have used certain filtrations of the tautological rings, and explicit bases for the tautological rings given by `standard monomials'. 
\end{rem}

\subsection{$R^\bullet(\M_{g,n}^{rt})$ as an algebra over $R^\bullet(\C_g^n)$}

Suppose that $X \to S$ is a family of smooth varieties of relative dimension $m$ over a smooth base. Let $Y = X^n$ be the $n$th fibered power over $S$, and let $G$ be the building set given by the diagonal loci. In this case, the wonderful compactification $Y_G$ coincides with the Fulton--MacPherson compactification $X[n]$ introduced in \cite{fmcompactification}.  

Let $i \colon Z \hookrightarrow Y$ be a closed smooth subvariety of a smooth variety. We define a \emph{Chern polynomial} for $Z \subset Y$ to be a polynomial $P_{Z\subset Y}(t) \in A^\bullet(Y)[t]$ of the form
$$ t^d + c_1 t^{d-1} + \ldots + c_d $$
where $d = \codim Z$, $c_i$ is a class in $A^i(Y)$ whose restriction to $A^i(Z)$ is the $i$th Chern class of the normal bundle $N_{Z\subset Y}$, and $c_d = [Z]$. If the restriction map $A^\bullet(Y) \to A^\bullet(Z)$ is surjective then a Chern polynomial always exists, and in this case one can moreover simplify the blow-up formula in Equation \eqref{chowringdecomposition}: we have
$$ A^\bullet(\Bl_Z Y) = A^\bullet(Y)[E]/ \langle P_{Z\subset Y}(-E), J_{Z\subset Y} \cdot E\rangle, $$
where $J_{Z \subset Y}$ denotes the kernel of $A^\bullet(Y) \to A^\bullet(Z)$. 

For every $S \subseteq \{1,\ldots,n\}$, define 
$$ D_S = \{(x_1,\ldots,x_n) \in X^n \st x_i = x_j \text{ if } i,j \in S\}$$
and let $J_S = \ker(A^\bullet(X^n) \to A^\bullet(D_S))$. For any $i,j \in \{1,\ldots,n\}$ let $P_{i,j}(t) \in A^\bullet(X^n)[t]$ be the Chern polynomial of $D_{\{i,j\}} \hookrightarrow X^n$. The following presentation of $A^\bullet(X[n])$ as an algebra over $A^\bullet(X^n)$ is given in \cite[Theorem 6]{fmcompactification}, although their original proof contained a minor gap \cite{fmcompactificationfix}.

\begin{thm}[Fulton--MacPherson] \label{fmtheorem} There is an isomorphism $$A^\bullet(X[n]) \cong A^\bullet(X^n)[\{E_S\}]/\text{relations,}$$
in which there is a generator $E_S$ for every $S \subseteq \{1,\ldots,n\}$ with $\vert S \vert \geq 2$, and the relations are given by 
\begin{enumerate}[\qquad\scshape(1)]
\item $E_S \cdot E_T = 0$ unless $S \cap T \in \{\emptyset,S,T\}$,
\item $J_S \cdot E_S = 0$,
\item For any distinct $i,j \in \{1,\ldots,n\}$, $P_{i,j}(-\sum_{\{i,j\} \subseteq S} E_S) = 0$. 
\end{enumerate}
\end{thm}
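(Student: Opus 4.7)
My plan is to iterate the simplified blow-up formula displayed just before the theorem along the wonderful compactification construction of Section \ref{wonderfulsection}. Before starting the induction, I would verify that each restriction map $A^\bullet(X^n) \to A^\bullet(D_S)$ is surjective by noting that duplicating coordinates in $S$ provides a retraction of the inclusion $D_S \hookrightarrow X^n$. As in the proof of Proposition \ref{poincaredualitywonderful}, this surjectivity persists after each blow-up in the sequence, so a Chern polynomial $P_{\widetilde{D_T} \subset Y^{(k)}}$ exists at every stage and the simplified formula applies throughout.

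I would then induct on the blow-up sequence, blowing up $\widetilde{D_{T_k}}$ at stage $k$ for $T_k$ of maximal cardinality among the diagonals not yet processed. The formula introduces a generator $E_{T_k}$ subject to the two families of relations
\begin{enumerate}[\qquad\scshape(i)]
\item $P_{\widetilde{D_{T_k}} \subset Y^{(k-1)}}(-E_{T_k}) = 0$, and
\item $J_{\widetilde{D_{T_k}} \subset Y^{(k-1)}} \cdot E_{T_k} = 0$.
\end{enumerate}
The main task is to rewrite these in terms of the original data $(P_{i,j}, J_S)$ and the previously introduced $E_S$. Relation (2) of the theorem is immediate from (ii), since $J_{T_k}$ pulls back into $J_{\widetilde{D_{T_k}} \subset Y^{(k-1)}}$. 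Relation (1) emerges from the remaining generators of this kernel: by the combinatorics of building sets, $\widetilde{D_U} \cap \widetilde{D_{T_k}}$ is empty at stage $k-1$ precisely when $\{U, T_k\}$ fails to be a nest, i.e.\ when $U \cap T_k \notin \{\emptyset, U, T_k\}$. Each such $E_U$ therefore lies in the kernel, and (ii) forces $E_U \cdot E_{T_k} = 0$.

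Relation (3) is extracted from the accumulated Chern-polynomial relations (i). Here one compares $P_{\widetilde{D_T} \subset Y^{(k-1)}}$ to the pullback of $P_T$; the difference is controlled by the excess-normal-bundle contributions from the earlier blow-ups, a computation parallel to the key formula \cite[Proposition 6.7(a)]{fultonintersectiontheory} invoked in the proof of Proposition \ref{poincaredualitywonderful}. Telescoping these contributions yields the identity in $A^\bullet(X[n])$ expressing the total transform of $D_{\{i,j\}}$ as $\sum_{\{i,j\} \subseteq S} E_S$, and substituting this expression into $P_{i,j}$ produces relation (3). Finally, to confirm that no further relations appear, I would produce the standard monomial basis for $A^\bullet(X[n])$ indexed by nests of subsets and match it against the iterated blow-up decomposition coming from Equation \eqref{chowringdecomposition}.

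The main obstacle will be the Chern-polynomial bookkeeping in the preceding paragraph — tracking precisely how the normal bundle of each $\widetilde{D_T}$ deforms under successive blow-ups, and then verifying that the telescoped family of relations (i) collapses to exactly the single relation (3) for each pair $(i,j)$. This is the step at which the original Fulton--MacPherson proof contained the gap addressed in \cite{fmcompactificationfix}.
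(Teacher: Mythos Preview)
The paper does not prove this theorem at all: it is stated as a result of Fulton and MacPherson, with the attribution ``The following presentation of $A^\bullet(X[n])$ as an algebra over $A^\bullet(X^n)$ is given in \cite[Theorem 6]{fmcompactification}, although their original proof contained a minor gap \cite{fmcompactificationfix}.'' There is no proof in the paper to compare against; the theorem is quoted from the literature and then used as input to derive Proposition~\ref{fmcor}.

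Your sketch is, in broad outline, the original Fulton--MacPherson argument: iterate the simplified blow-up formula along the inductive construction, and at each stage translate the two local relations (Chern polynomial of the center, kernel of restriction to the center) into the global relations (1)--(3). You have correctly identified both where the work lies and where the original argument was delicate --- the bookkeeping that shows the accumulated Chern-polynomial relations collapse to exactly relation (3), which is precisely the point addressed by \cite{fmcompactificationfix}. So your plan is reasonable and faithful to the source, but it is not something the present paper undertakes; if you want a self-contained proof you should consult \cite{fmcompactification} together with \cite{fmcompactificationfix} rather than this paper.
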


\begin{rem} \label{altrel} One can give alternative presentations which are less economical but sometimes more practical. Suppose that $\{S_1,\ldots,S_k\}$ are disjoint subsets of $\{1,\ldots,n\}$ with $\vert S_i \vert \geq 2$, and that $S_i \subseteq T$ for $i=1,\ldots,k$. Let $W = \bigcap_{i=1}^k D_{S_i}$. If $k=0$ then we set $W=X^n$. Then there is a relation
$$ P_{D_T \subset W}(-\sum_{T \subseteq S}E_S) \cdot \prod_{i=1}^k E_{S_i} = 0.$$
This follows from the fact that $P_{D_T \subset W}(-\sum_{T \subseteq S}E_S)$ vanishes in $A^\bullet(\Bl_{D_T} W)$. The third class of relations in Theorem \ref{fmtheorem} is recovered when $k=0$ and $\vert T \vert = 2$.
\end{rem}

Theorem \ref{fmtheorem} also gives a presentation of $R^\bullet(\M_{g,n}^{rt})$ as an algebra over $R^\bullet(\C_g^n)$. To fix notation, let $D_S$ (as above) denote the diagonal loci in $\C_g^n$, and denote by the same symbol their classes in $A^\bullet(\C_g^n)$. Let $K \in A^1(\C_g)$ be the first Chern class of the relative dualizing sheaf of $\pi \colon \C_g \to \M_g$, and let $K_i \in A^1(\C_g^n)$ be the pullback of $K$ from the $i$th factor. Thus $R^\bullet(\C_g^n)$ is generated by the classes $D_{ij}$, $K_i$ and the $\kappa$-classes. (The $\kappa$-classes are pulled back from $\M_g$, where they are defined as $\kappa_d = \pi_\ast K^{d+1}$.) 

\begin{prop} \label{fmcor} There is an isomorphism $$R^\bullet(\M_{g,n}^{rt}) \cong R^\bullet(\C_g^n)[\{E_S\}]/\text{relations,}$$
in which there is a generator $E_S$ for every $S \subseteq \{1,\ldots,n\}$ with $\vert S \vert \geq 3$, and the relations are given by 
\begin{enumerate}[\qquad\scshape(1)]
\item $E_S \cdot E_T = 0$ unless $S \cap T \in \{\emptyset,S,T\}$,
\item $D_{ij} \cdot E_S = 0$ for $i \in S$, $j \notin S$,
\item $(D_{ij} + K_j) \cdot E_S = 0$ for $i, j \in S$,
\item For any distinct $i,j \in \{1,\ldots,n\}$, $\sum_{\{i,j\} \subseteq S} E_S = 0$. 
\end{enumerate}\end{prop}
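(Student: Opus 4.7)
The plan is to deduce Proposition \ref{fmcor} from Theorem \ref{fmtheorem} applied to the family $\C_g\to\M_g$. By example~(2) of the remark in Subsection \ref{wonderfulsection}, the Fulton--MacPherson compactification of this family is exactly $\M_{g,n}^{rt}$, so Theorem \ref{fmtheorem} gives a presentation of $A^\bullet(\M_{g,n}^{rt})$ as an $A^\bullet(\C_g^n)$-algebra with generators $E_S$ for $|S|\geq 2$ subject to its three families of relations. The remark after Proposition \ref{poincaredualitywonderful2} describes $R^\bullet(\M_{g,n}^{rt})$ as the span of products $\alpha\cdot E_{S_1}\cdots E_{S_k}$ with $\alpha$ tautological on the corresponding burrow, so the same presentation restricts verbatim to the tautological setting once one observes that the $K_i$, $\kappa_j$, diagonal classes and Chern classes of normal bundles of diagonals are all tautological on $\C_g^n$.

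The key simplification is that the family has relative dimension one, so that every Chern polynomial $P_{D_T\subset W}(t)$ has degree $|T|-1$, and in particular the pairwise diagonal Chern polynomial is linear: $P_{i,j}(t) = t+D_{ij}$. Relation~(3) of Theorem \ref{fmtheorem} thus becomes the linear identity $\sum_{\{i,j\}\subseteq S}E_S = D_{ij}$. Because every $D_{ij}$ is already Cartier in $\C_g^n$, example~(2) of the remark in Subsection \ref{wonderfulsection} allows the equivalent smaller building set $\{D_S:|S|\geq 3\}$; equivalently, one uses this linear identity to eliminate the generators $E_{\{i,j\}}$ from the presentation. The content of the linear Chern polynomial identity in the reduced presentation is then what remains as relation~(4) of the proposition.

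The other FM relations unfold as follows. Relation~(1) of Theorem \ref{fmtheorem} with $|S|,|T|\geq 3$ is exactly relation~(1) of the proposition. For $|S|=2$ and $|T|\geq 3$, substituting the eliminated $E_{\{i,j\}}$ and using relation~(1) for the resulting $E_U\cdot E_T$ (with $\{i,j\}\subseteq U$, $|U|\geq 3$) leaves exactly the vanishing $D_{ij}\cdot E_T=0$ whenever $\{i,j\}$ and $T$ are not nested, which is relation~(2). For relation~(2) of Theorem \ref{fmtheorem}, one identifies the kernel $J_S = \ker(R^\bullet(\C_g^n)\to R^\bullet(D_S))$ using the isomorphism $D_S\cong\C_g^{n-|S|+1}$: it is generated in tautological terms by $K_i-K_j$ and $D_{ij}+K_j$ for $i,j\in S$, together with $D_{ij}-D_{i'j}$ for $i,i'\in S$ and $j\notin S$. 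The generators $D_{ij}+K_j$ produce relation~(3), while the other families of generators give relations that are consequences of (2) and (3) combined with basic identities in $R^\bullet(\C_g^n)$ such as $(K_i-K_j)D_{ij}=0$.

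The main obstacle is the careful bookkeeping needed to verify that the four displayed relations exhaust, after the elimination of $E_{\{i,j\}}$, the three FM families with no superfluous or missing relations. A clean organising principle is Remark \ref{altrel}: its more general polynomial identities $P_{D_T\subset W}(-\sum_{T\subseteq S}E_S)\cdot\prod E_{S_i}=0$ are also linear in the relative-dimension-one setting, and specialising them for $|T|=2$ in different configurations produces relations~(2) and~(4) directly, while relation~(3) records the extra tautological content of FM~(2) coming from the restriction $D_{ij}|_{D_S} = -K_j$ for $i,j\in S$.
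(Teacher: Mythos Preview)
Your proposal is correct and follows essentially the same route as the paper: apply Theorem~\ref{fmtheorem} to the family $\C_g\to\M_g$, pass to the tautological subalgebra, exploit that in relative dimension one the Chern polynomial $P_{i,j}(t)=t+D_{ij}$ is linear to eliminate the generators $E_{\{i,j\}}$, and then translate the three FM relations into the four listed ones. The paper's determination of $J_S$ is organized slightly differently---it uses the section of $D_S\hookrightarrow\C_g^n$ given by forgetting points, and notes that $(K_i-K_j)\cdot E_S=0$ follows from relation~(3) alone via $(D_{ij}+K_j)-(D_{ij}+K_i)$ rather than from an auxiliary identity in $R^\bullet(\C_g^n)$---but otherwise the arguments coincide.
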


\begin{proof}Theorem \ref{fmtheorem} gives a presentation of $A^\bullet(\M_{g,n}^{rt})$ over $A^\bullet(\C_g^n)$. The subalgebra generated over $R^\bullet(\C_g^n)$ by the exceptional divisors $E_S$ is exactly $R^\bullet(\M_{g,n}^{rt})$, and so we can read off a presentation for the tautological ring of $\M_{g,n}^{rt}$ from the theorem. 

Observe first that when $S = \{i,j\}$, we have $E_S = D_S$. Thus we can take the $E_S$ with $\vert S \vert \geq 3$ as generators. From the first relation in Theorem \ref{fmtheorem}, we see that we then need to impose the additional relation $D_{ij} \cdot E_S = 0$ for $i \in S$, $j \notin S$.

To determine the ideals $J_S$, observe firstly that the restriction map
$$ R^\bullet(\C_g^n) \to R^\bullet(D_S)$$
has a section. Indeed, the inclusion $D_S \hookrightarrow \C_g^n$ admits a left inverse given by forgetting all but one of the elements of $S$, say $s \in S$; the section is given by pullback along this left inverse. The image of this section is the algebra generated by the $\kappa$-classes and those $D_{ij}$ and $K_i$ with $i,j \notin S \setminus \{s\}$. 

Modulo the ideal $J_S$ we have the obvious relations $K_i - K_j =0 $ and $D_{ik} - D_{jk} =0$, where $i,j \in S$ and $k \notin S$. Moreover, the self-intersection formula implies that $D_{ij}^2 = -K_j D_{ij}$, so that $D_{ij}+K_j = 0$ modulo $J_S$ when $i,j \in S$. Modulo these relations every element of $R^\bullet(\C_g^n)$ is in the image of the section; that is, these relations can be used to reduce any polynomial in the $K_i$ and $D_{ij}$ to one in which none of the indices in $S \setminus \{s\}$ appear. Thus these relations generate $J_S$ and we can replace the relation $J_S \cdot E_S = 0$ with the third relation in our list, viz.\ $(D_{ij} + K_j) \cdot E_S = 0$ for $i, j \in S$. Notice that the relation $(D_{ik} - D_{jk}) \cdot E_S = 0$ for $i,j \in S$ and $k \notin S$ is not needed since it follows from the second relation in our list, and the relation $(K_i - K_j)\cdot E_S = 0$ is not needed since $(D_{ij} - K_j) - (D_{ij} - K_i) = (K_i-K_j)$. 

Finally, the Chern polynomial is given by $P_{ij}(t) = t + D_{ij}$. Keeping in mind that $E_{ij} = D_{ij}$, the final relation follows. \end{proof}

\begin{rem} A slightly less economical presentation can be obtained as in Remark \ref{altrel}. In this form, Proposition \ref{fmcor} was conjectured in \cite{tavakolconjecturalconnection}. \end{rem}

One can also give an additive description of $R^\bullet(\M_{g,n}^{rt})$ in terms of $R^\bullet(\C_g^m)$, $m \leq n$. More generally, for any wonderful compactification, the Chow groups of $Y_G$ were calculated in \cite{limotives}.

Let $G$ be a building set, and suppose that $N\subseteq G$ is a nest. A function $\mu \colon N \to \Z_{> 0}$ is called \emph{standard} if for all $X \in N$ we have
$$ \mu(X) < \codim(X) - \codim\left( \bigcap_{\substack{Z \in N \\ X \subsetneq Z}} Z\right). $$
For such a function, we denote $\Vert \mu \Vert = \sum_{X \in N} \mu(X)$. 

The following theorem specializes in particular to give a direct sum decomposition of $R^\bullet(\M_{g,n}^{rt})$ whose summands correspond to tautological rings of $\C_g^m$, $m \leq n$, with a degree shift. One can for instance express the Gorenstein discrepancies of $\M_{g,n}^{rt}$ in terms of those for $\C_g^m$ for $m \leq n$. We omit the details, as the procedure should be clear by now. 

\begin{thm}[Li] \label{lithm} Let $Y$ be a smooth variety, $G$ be a building set on $Y$. Then 
$$ A^\bullet(Y_G) = \bigoplus_{N} \bigoplus_\mu A^{\bullet - \Vert \mu \Vert} (\bigcap_{X \in N} X).  $$
Here the first summation runs over all nests $N \subseteq G$, and the second over all standard functions $\mu \colon N \to \Z_{>0}$. \end{thm}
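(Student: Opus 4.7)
My approach is induction on the number of elements of $G$ that are not already Cartier divisors in $Y$. For the base case, every element of $G$ is a divisor, so no blow-ups are required and $Y_G = Y$. Given any nonempty nest $N$, an inclusion-maximal element $X \in N$ satisfies $\bigcap_{X \subsetneq Z \in N} Z = Y$ (empty intersection convention), so the condition $\mu(X) < \codim(X) - 0 = 1$ has no solution in $\Z_{>0}$. Hence only the empty nest contributes and the RHS collapses to $A^\bullet(Y)$, matching the LHS.

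For the inductive step, pick $X \in G$ of minimal dimension (so $c := \codim X \geq 2$, else we are in the base case), form $Y^{(1)} = \Bl_X Y$ with induced building set $G^{(1)}$, and apply the inductive hypothesis to $(Y^{(1)}, G^{(1)})$. The non-divisor count strictly decreases since $X$ is replaced by the Cartier divisor $E_X$, yielding
$$ A^\bullet(Y_G) = \bigoplus_{N^{(1)}} \bigoplus_{\mu^{(1)}} A^{\bullet - \Vert \mu^{(1)} \Vert}\left(\bigcap_{W \in N^{(1)}} W\right).$$
Using the description of burrows in $G^{(1)}$ from Subsection \ref{wonderfulsection} together with the blow-up formula \eqref{chowringdecomposition} (or its projective-bundle analogue), each burrow $\bigcap_{W \in N^{(1)}} W$ further decomposes in terms of the Chow ring of a corresponding burrow in $G$, plus extra powers of an exceptional or hyperplane class.

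The main obstacle is to repackage this into a sum indexed by pairs $(N, \mu)$ for the original $(Y,G)$. I would case-split on whether $E_X \in N^{(1)}$. If $E_X \notin N^{(1)}$, the nest corresponds via dominant transform to a nest $N \subseteq G$ with $X \notin N$; the extra exceptional-class powers arising inside the burrow should be collected and reinterpreted as adjoining $X$ to $N$ with a new standard function value $\mu(X)$ equal to the exponent, so that the exponent bound $1 \leq k \leq c-1$ matches the standard-function inequality at $X$. If $E_X \in N^{(1)}$, the nest corresponds to some $N \ni X$ in $G$, and $\mu^{(1)}(E_X)$ plus any extra exceptional-class exponent translates into $\mu(X)$. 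The crux is verifying that the standard-function inequality at $X$, namely $\mu(X) < \codim(X) - \codim\bigl(\bigcap_{X \subsetneq Z \in N} Z\bigr)$, decomposes additively into the corresponding inequality for $\mu^{(1)}$ at $E_X$ plus the blow-up exponent range; once this bijection is established and the degree shifts match, the induction closes. Independence from the choice of minimal $X$ is then automatic since the LHS does not depend on it.
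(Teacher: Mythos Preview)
The paper does not prove this theorem: it is stated as a result of Li, with a citation, and no proof is given. What follows the statement in the paper is only a description of the map from the right hand side to $A^\bullet(Y_G)$ and an explanation of surjectivity via the relations \eqref{relation}. So there is no argument in the paper to compare your proposal against.

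That said, your inductive outline is the natural one and is exactly parallel to how the paper proves Propositions \ref{prop1}--\ref{poincaredualitywonderful}. Your base case is correct. The substance of the theorem is the bijection you flag as ``the crux,'' and here your sketch is incomplete in a way that hides real work. When $E_X \notin N^{(1)}$, the burrow for $N^{(1)}$ is the dominant transform $\widetilde B$ of $B=\bigcap_{Z\in N}Z$, which by the paper's description of burrows in $G^{(1)}$ is $\Bl_{B\cap X}B$; the extra summands in $A^\bullet(\widetilde B)$ are shifts of $A^\bullet(B\cap X)$ by $k$ with $1\le k\le \codim_B(B\cap X)-1$. To match these with the terms $(N\cup\{X\},\mu)$ for $\mu(X)=k$ you must know that $N\cup\{X\}$ is again a nest with burrow $B\cap X$, and that
\[
\codim_B(B\cap X)=\codim_Y X-\codim_Y\Bigl(\bigcap_{\substack{Z\in N\\ X\subsetneq Z}}Z\Bigr),
\]
so that the blow-up exponent range coincides with the standard-function bound at $X$. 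Your write-up implicitly assumes the right hand side is just $\codim_Y X$, which is only the case when no element of $N$ contains $X$. Both facts are true, but they use the building-set axioms (in particular, the transversality properties and the minimality of $X$), and likewise the $E_X\in N^{(1)}$ case requires matching the projective-bundle rank with the standard-function bound for $\mu^{(1)}(E_X)$. These verifications are precisely the content of Li's argument; once you actually carry them out, your approach does give a proof.
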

Note that the summation includes in particular the {empty nest} $N = \emptyset$, corresponding to the single summand $A^\bullet(Y)$. 

To make sense of the ring structure on the right hand side of this isomorphism, let us define a map from the right hand side to the left hand side. Given a nest $N$, a standard function $\mu$ and an element $\alpha \in  A^{\bullet - \Vert \mu \Vert} (\bigcap_{X \in N} X)$, the element
$$ \alpha \cdot \prod_{X \in N} E_X^{\mu(X)}$$
is well defined in $A^\bullet(Y_G)$. To see that this map is surjective one uses relations analogous to those in Remark \ref{altrel}. Specifically, suppose that $N$ is a nest and $X \in N$. Let $Z_1,\ldots, Z_k$ be the minimal elements of $\{Z \in N \st X \subsetneq Z\}$, and let $W = \bigcap_{i=1}^k Z_i$. Then
\begin{equation}\tag{**}
P_{X\subset W}(-\sum_{S \subseteq X}E_S) \cdot \prod_{i=1}^k Z_i = 0. \label{relation}
\end{equation} 
In general a monomial $\prod_X E_X^{\mu(X)}$ can only be nonzero if the set $\{X \st \mu(X) > 0\}$ is a nest, and successive applications of the relation \eqref{relation} will reduce any such monomial to a linear combination of ones in which the exponents $\mu$ is a standard function. To see this, note that the degree of $P_{X\subset W}$ is 
$$ \codim X - \codim W = \codim(X) - \codim\left( \bigcap_{\substack{Z \in N \\ X \subsetneq Z}} Z\right), $$
which is precisely the upper bound appearing in the definition of a standard function. 

\begin{rem} The elements $ \alpha \cdot \prod_{X \in N} E_X^{\mu(X)}$ where $\mu$ is a standard function are equivalent to the \emph{standard monomials} used by Tavakol in several of his papers (see Remark \ref{tavakolremark}). Thus we see the connection between Tavakol's work and Li's Theorem \ref{lithm}.
\end{rem}
%

\begin{rem} Suppose as in Proposition \ref{poincaredualitywonderful} that there exists an integer $d$ such that for every  burrow $Z \subseteq Y$ (including $Z=Y$, corresponding to the empty nest), $A^{d -\codim Z}(Z) \cong \Q$ and $A^\bullet(Z)$ vanishes above this degree. Then it is not hard to prove that two summands $(N,\mu)$ and $(N',\mu')$ in the  decomposition can have a nontrivial pairing into the top degree only if $N=N'$ and 
$$ \mu(X) + \mu'(X) \geq \codim(X) - \codim\left( \bigcap_{\substack{Z \in N \\ X \subsetneq Z}} Z\right) $$
for all $X \in N$. It follows from this that the intersection matrices describing the pairing into the top degree are block-triangular. This gives another approach to Proposition \ref{poincaredualitywonderful}.  \end{rem}

\printbibliography

\end{document}